\newtheorem{theorem}{Theorem}
\newtheorem{definition}[theorem]{Definition}
\newtheorem{lemma}[theorem]{Lemma}
\newtheorem{proposition}[theorem]{Proposition}
\newtheorem{remark}[theorem]{Remark}
\newcommand{\cpfadd}[1]{{\color{blue}#1}}
\renewenvironment{proof}[1][Proof]{\noindent\textbf{#1.} }{\ \rule{0.5em}{0.5em}}
\begin{document}

\title{Global existence, regularity and a probabilistic scheme for a class of ultraparabolic Cauchy problems
}

\author{Christian Fries$^1$, J\"org Kampen$^{2}$}

\maketitle

\begin{abstract}

In this paper we establish a constructive method in order to show global existence and regularity for a class of degenerate parabolic Cauchy problems which satisfy a weak H\"{o}rmander condition on a subset of the domain where the data are measurable and which have regular data on the complementary set of the domain. This result has practical incentives related to the computation of Greeks in reduced LIBOR market models, which are standard computable approximations of the HJM-description of interest rate markets. The method leads to a probabilistic scheme for the computation of the value function and its sensitivities based on Malliavin calculus. From a practical perspective the main contribution of the paper is an Monte-Carlo algorithm which includes weight corrections for paths which move in time into a region where a (weak) H\"{o}rmander condition holds.

 {\it Keywords:}   ultra-parabolic equations, hypoellipticity, Mallaivin calculus, sensitivities, Monte-Carlo algoritms, reduced LIBOR market models.
\\[0.6ex] {\it 2000 AMS subject
classification:} 60H10, 62G07, 65C05 
\end{abstract}

\newpage


\tableofcontents

\newpage

\section{Introduction}
In mathematical finance, e.g., derivative valuation and risk management, stochastic differential equations (SDEs) are used to model a possible high dimensional state space which depends on several sources of risk (diffusions), also known as \textit{factors}. Practical incentives lead to a reduction of the number of factors. Reduced LIBOR market models are a prominent example. Such a reduction may be admissible when the payoff (or sensitivity) depends only very weakly (smoothly) on the corresponding factor. In the equivalent formulation using partial differential equations (PDEs), this leads to class of ultra-parabolic Cauchy problems, where diffusions  degenerate in a strict sense on parts of the domain where the initial data are smooth. Here, 'strict sense' means that the diffusion equation my have no density. Consequently even a weak H\"{o}rmander condition may not hold on the whole domain. Here, the introduction of a weak H\"{o}rmander condition in this paper is related to the fact that diffusion coefficients in financial models may not be $C^{\infty}$-functions but only Lipschitz globally$^{3}$\footnotetext[3]{We thank an associate editor of F\&S for this remark commenting a former version of this paper; furthermore we adopted the term 'ultraparabolic' from a referee of the former version which seems to fit better than the term 'semi-elliptic' which was used in the former version.}. 


For such classes of ultra-parabolic Cauchy problems the computation of Greeks seems to be difficult. In general, the standard methods of Malliavin calculus fail because the Malliavin-covariance matrix is not invertible in any $L^p$-sense for $p\geq 1$ as required. Nevertheless volatility matrices $\sigma$ of the models used in practice are Lipschitz-continuous functions and satisfy a weak ellipticity, i.e., they satisfy
\begin{equation}
	\sigma \sigma^T \ \geq \ 0 \text{.}
\end{equation}
If the volatility matrices satisfy a linear growth, and the payoffs and the data satisfy a certain regularity condition, then Peano's method adapted to stochastic ODEs is the best method available in order to establish global existence and (weak) regularity.   
This standard theory of stochastic differential equations provides Feller-continuous value functions which solve the associated Cauchy problems. If the payoffs are only continuous (but nonnegative) then the standard theory provides only lower semi-continuous solutions. For the computation of sensitivities (Greeks in finance) it is desirable to have more regular solutions. Moreover, we note that in the situation of financial applications the restriction to bounded continuous payoffs (or, equivalently, initial data) is  a limitation. We would like to have at least Lipschitz continuous payoffs where the growth of the payoff has an exponential bound (note that Cauchy problems in finance are formulated in logarithmic coordinates). Furthermore, we have observed that there are some limitations concerning the regularity of the initial data. In order to obtain progress in this direction we shall impose partial regularity. We assume measurable data on a domain where the Malliavin-covariance matrix is invertible and regular data elsewhere. The reason is quite obvious: on a certain subspace the degenerate operator of the factor-reduced problem operates similar as a vector-field, and a vector-field merely transports irregularities.

A standard theorem concerning ordinary stochastic differential equations (for statement and proof cf. \cite{O}) is the following.
\begin{theorem}
	\label{sdethm}
	Let $T>0$ and let $b:[0,T]\times {\mathbb R}^n\rightarrow {\mathbb R}^n$, and $\sigma :[0,T]\times {\mathbb R}^n\rightarrow {\mathbb R}^{n\times m}$
	be measurable functions, where
	\begin{equation}
		|b(t,x)|+|\sigma(t,x)|\leq C(t+|x|);~x\in {\mathbb R}^n,~t\in [0,T] \text{,}
	\end{equation}
	for some constant generic $C>0$ and with $|\sigma(t,x)|=\sqrt{\sum_{ij}|\sigma_{ij}|^2}$ ($|.|$ denoting the Euclidean norm), and such that
	\begin{equation}
		\label{Lip}
		|b(t,x)-b(t,y)|+|\sigma(t,x)-\sigma(t,y)|\leq C|x-y|;~x\in {\mathbb R}^n,~t\in [0,T].
	\end{equation}
	Let $Z$ be a random variable independent of the $\sigma$-algebra ${\cal F}_{\infty}$ generated by $W(s),~s\geq 0$ and such that $E(|Z|^2)<\infty$.
	Then the stochastic differential equation 
	\begin{equation}\label{sdebasic}
		dX(t)=b(t,X(t))dt+\sigma(t,X(t))dW(t),~0\leq t\leq T,~X(0)=Z
	\end{equation}
	has a unique $t$-continuous solution $(t,\omega)\rightarrow X(t,\omega)$, where each component of $X(t,\omega)$ belongs to the space
	$$
	{\cal V}(0,T):=\left\lbrace h(t,\omega):[0,\infty)\times \Omega\rightarrow {\mathbb R}|h~\mbox{satisfies (i),(ii), (iii)}\right\rbrace ,
	$$ 
	along with the conditions
	\begin{itemize}
		\item[(i)]$~(t,\omega)\rightarrow h(t,\omega)$ is ${\cal B}\times {\cal F}$-measurable, where ${\cal B}$ denotes the Borel $\sigma$-algebra on $[0,\infty)$,
\item[(ii)] $h(t,\omega)$ is ${\cal F}_t$-adapted,
\item[(iii)] $E\left[\int_0^T h(t,\omega)^2dt\right]<\infty$. 
\end{itemize}
\end{theorem}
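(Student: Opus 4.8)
The plan is to construct the solution by Picard--Lindel\"of iteration adapted to the stochastic setting. Define $X^{(0)}(t) := Z$ for all $t$, and inductively
\[
X^{(k+1)}(t) := Z + \int_0^t b(s,X^{(k)}(s))\,ds + \int_0^t \sigma(s,X^{(k)}(s))\,dW(s).
\]
First I would check that each $X^{(k)}$ is well defined and lies componentwise in $\mathcal{V}(0,T)$: adaptedness is propagated at each step because $b,\sigma$ are measurable and an It\^o integral of an adapted integrand is adapted, while the linear-growth bound on $b,\sigma$ together with $E(|Z|^2)<\infty$ gives, via the It\^o isometry and the elementary inequality $(a+b+c)^2 \le 3(a^2+b^2+c^2)$, an a priori estimate $\sup_{0\le t\le T} E(|X^{(k)}(t)|^2) \le C_T$ uniform in $k$ (proved by induction, with the $k$-dependence absorbed through Gronwall's lemma).

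Next comes the contraction estimate. Setting $\Delta_k(t) := E\big(|X^{(k+1)}(t) - X^{(k)}(t)|^2\big)$, the Lipschitz condition \eqref{Lip} together with the It\^o isometry yields
\[
\Delta_{k}(t) \le C \int_0^t \Delta_{k-1}(s)\,ds
\]
for a constant $C$ depending only on the Lipschitz constant and $T$. Iterating this gives $\Delta_k(t) \le (Ct)^k M / k!$ with $M$ a bound for $\Delta_0$ on $[0,T]$, so $\sum_k \sup_{t\le T}\Delta_k(t)^{1/2} < \infty$. To promote this mean-square convergence to almost sure uniform convergence on $[0,T]$ --- which is what underlies the $t$-continuity asserted in the statement --- I would apply Doob's maximal inequality to the martingale (stochastic-integral) increment and the Cauchy--Schwarz inequality to the drift increment, obtaining $E\big(\sup_{t\le T}|X^{(k+1)}(t)-X^{(k)}(t)|^2\big) \le (\tilde C T)^k M/k!$; Chebyshev's inequality and the Borel--Cantelli lemma then show that $X^{(k)}$ converges uniformly on $[0,T]$ almost surely to a limit $X$, which is therefore $t$-continuous and, being a uniform limit of adapted continuous processes, adapted, and lies in $\mathcal{V}(0,T)$ by the uniform $L^2$ bound. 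Passing to the limit inside both integrals --- justified again by the Lipschitz estimates and the It\^o isometry --- shows that $X$ solves \eqref{sdebasic}.

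For uniqueness, suppose $X$ and $\hat X$ are two solutions in $\mathcal{V}(0,T)$ with $X(0)=\hat X(0)=Z$. Writing $g(t) := E\big(|X(t)-\hat X(t)|^2\big)$, the same Lipschitz-plus-It\^o-isometry computation gives $g(t) \le C\int_0^t g(s)\,ds$ with $g$ bounded on $[0,T]$, so Gronwall's inequality forces $g \equiv 0$; hence $X(t)=\hat X(t)$ almost surely for each $t$, and by $t$-continuity of both processes they are indistinguishable. I expect the main technical obstacle to be precisely the step that upgrades mean-square convergence of the Picard iterates to almost sure uniform convergence on $[0,T]$: this is where Doob's submartingale inequality for the stochastic-integral increments together with a Borel--Cantelli argument is essential, and it is also the step that delivers the $t$-continuity of the solution claimed in the theorem. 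The remaining ingredients --- the linear-growth a priori bound and the Gronwall closing of both the contraction and the uniqueness estimates --- are routine once the It\^o isometry is available.
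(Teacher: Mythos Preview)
Your proof is correct and follows exactly the standard Picard--Lindel\"of iteration argument. Note, however, that the paper does not supply its own proof of this theorem at all: it is quoted as a standard result with the parenthetical ``for statement and proof cf.\ \cite{O}'' (\O{}ksendahl's textbook), and the argument you have sketched is precisely the one found there.
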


\cpfadd{}

Note that no strict ellipticity condition is involved. Indeed, even a weak ellipticity condition such as the H\"{o}rmander condition is not involved. Since we are interested in Greeks, our main concern is the regularity of the functions
\begin{equation}\label{expect}
	(t,x)\rightarrow u(t,x) \ = \ E^x\left( f\left( X_t\right) \right) ,
\end{equation}
and its derivatives (with respect to some arguments or with respect to other parameters). Here, $X_t$ is the solution of (\ref{sdebasic}), (which will satisfy $X(0)=x$ in general), and $f$ is some function. The standard theory derives a global existence result for $u$ as a solution for the associated Cauchy problem
\begin{equation}\label{CP}
\left\lbrace \begin{array}{ll}
 \frac{\partial u}{\partial t}-\mbox{Tr}\left( \sigma\sigma^TD^2u\right)-\sum_{i=1}^nb_i\frac{\partial u}{\partial x_i}=0,\\
 \\
 u(0,x)=f(x), 
\end{array}\right.
\end{equation}
via the It\^{o}-formula for data $f\in C^2\left({\mathbb R}^n\right)_K$, i.e., for data which are twice differentiable and have a compact support $K$.
The closure of such a function space of data is the space $C_0\left({\mathbb R}^n\right) $, i.e., the space of continuous functions which vanish at infinity (note that this type of closure is valid for any locally compact Hausdorff space). 
The best result that we could obtain for data $C_0\left({\mathbb R}^n\right)$ 
using a standard argument is that the function $u$ in (\ref{expect}) is continuous. However, it is not clear in which sense it might be a solution of the associated Cauchy problem (\ref{CP}).
It seems impossible to verify that such a limit is a solution in the viscosity sense. It may be proved that the limit is a solution in a very weak sense. This type of result is not sufficient for computing sensitivities of ultraparabolic models, a situation which may arise for reduced standard models in interest rate markets. In such an application, we want to compute Greeks and this implies that we want to compute first or second derivatives of value functions, and since we want to produce numbers, it is not sufficient to have existence in a distributional sense. Second, we want to allow for data which may have exponential growth at infinity- this is the nature of standard payoffs rewritten in coordinates $x_i=\ln (S_i)$ where $S_i$ are the lognormal coordinates. Now the standard theory for Greeks is the Malliavin calculus which was first developed as a probabilistic reformulation of H\"{o}rmander's result in \cite{H}. However, this calculus cannot be applied directly in the context of the class of ultra-parabolic equations which is considered here. The reason is the following. Malliavin calculus defines natural closure spaces $D^{r,p}$ for differentiability with respect to random increments of order $r$ in $L^p$-sense (with probabilistic interpretation) for random vectors $X=(X^1,\cdots ,X^m)$. A cornerstone of the theory is the so-called Malliavin covariance matrix which is defined by
\begin{equation}\label{covar}
\gamma^{ij}_X=\sum_{r=1}^n\int_0^{\infty}D^r_tX^i_tD^r_tX^j_t dt.
\end{equation}
The computation of Greeks is then based on integration by parts formulae which are typically of the  form
\begin{equation}
E\left(\frac{\partial}{\partial x_i}f(X)Y \right)=E(f(X)W^i(X,Y)), 
\end{equation}
where $X,Y\in D^{1,2}$ are some random vectors and $W^i$ is a weight functional involving the inverse of the covariance matrix (\ref{covar}) as a factor. This covariance matrix has to be not only invertible a.s., it has to satisfy a certain kind of $L^p$-invertability, i.e., it has to satisfy
\begin{equation}\label{densitycond}
E\left( \left( \mbox{det}\left( \gamma^{ij}_X\right)^{-1}\right)  \right)<\infty~\mbox{ for all}~p>1. 
\end{equation}
However, this condition (which is related to the existence of densities) is not satisfied by reduced financial market models in general (even if the disperison coefficients are smooth). Therefore, we need an extension of the theory to some classes of ultraparabolic equations which subsume reduced financial market models where (\ref{densitycond}) does not hold. 
Finally, for smooth bounded data and smooth coefficient functions with bounded derivatives it may be possible to prove regularity of the function
\begin{equation}\label{exfunc}
x\rightarrow E^x_{S}\left(f\left(X_{t\wedge\tau}\right) \right) 
\end{equation}
for some $S\subset {{\mathbb R}^n} $ where $t\wedge \tau$ is the minimum of $t$ and the first exit time of $\Omega$. If $t\wedge \tau$ is a well-defined stopping time, and the data $f$ and coefficient functions $\left( \sigma\sigma^T\right)_{ij}$ are as indicated, then the function
(\ref{exfunc}) may be smooth. This may be proved by differentiation of the processes $X^x_t$ with respect to the starting point $x$ where you may prove existence for the derivatives using standard techniques for stochastic ODEs (Picard iterations in appropriate functions spaces). We have not found this in the literature, and it is not a main purpose of this paper. Therefore we introduce this as an assumption saying that
\begin{itemize}
\item[(A)] is satisfied if the function (\ref{exfunc}) is $C^{\infty}$ for all $t\geq 0$ and $S={\mathbb R}^n\setminus \Omega$ and for some domain $\Omega\subset H$, where $H\subset {\mathbb R}^n$ is the set where the H\"{o}rmander condition associated with the process $X_t$ holds (for a definition of the H\"{o}rmander condition cf. below). 
\end{itemize}

In the next section we consider a class of ultraparabolic Cauchy problems which are defined in terms of smooth vector fields on a subdomain where a classical H\"{o}rmander condition holds. This result may be applied in the case of a classical reduced LIBOR market model. However, in the context of stochastic volatility extensions of the LIBOR market model results for Lipschitz-continuous coefficients are desirable. This requires certain weak H\"{o}rmander conditions which are considered in section 3 of this paper. In this case there are some restrictions with respect to regularity as may be expected from the perspective of Malliavin calculus. In section 4  of this paper we construct a weighted MC-algorithm for the class of ultraparabolic models considered in this paper.

\section{Global regularity for a class of degenerate para\-bolic equations}
\label{sec:finstochhypo:globalExistenceClassOfDegenerateParabolicEquations}
The standard existence theory of SDEs which leads to theorem \ref{sdethm} is essentially a generalisation of Picard's iteration methods of ODEs in the context of infinite dimensional state spaces. This method together with the Feynman-Kac formalism may be applied to get regularity results if (derivatives of ) the data and (derivatives of) the coefficicients are smooth and of some polynomial decay at spatial infinity, i.e., at least for data $f\in \cap_{s\in {\mathbb R}}H^s$ and coefficients $\sigma\sigma^T=\left( \sigma\sigma^T_{ij} \right)$ with $\sigma\sigma^T_{ij}\in \cap_{s\in {\mathbb R}}H^s$, where $H^s\equiv H^s\left({\mathbb R}^n\right)$ are the Sobolev spaces of exponent $s\in {\mathbb R}$. In this case we may differentiate the expectation value expression of the value function and apply the standard method again to gain more regularity. However this method cannot be applied if the data or the coefficient functions of the SDE are of lower regularity. Both features are typical for models of mathematical finance. In addition we typically have exponential growth of data at infinity for such models. In this section we weaken the regularity condition and the growth condition of the data $f$. Indeed we shall allow for exponential growth of the data, and we allow for data which are measurable on a part of the domain where a classical H\"{o}rmander condition holds. We speak of a partial H\"{o}rmander condition if the H\"{o}rmander condition does not hold on the whole domain of the Cauchy problem. In order to formulate a classical H\"{o}rmander condition we need smoothness of the coefficients. In many situation such as reduced versions of classical Libor market models these conditions are satisfied. However, for some stochastic volatility models we may have weaker regularity of the coefficients too, and in such cases we need a weaker form of the partial H\"{o}rmander condition. This extension will be considered in the next section. In this section assuming regular coefficients we may formulate the partial H\"{o}rmander condition classically in terms of vector fields. First we may reformulate the Cauchy problem in (\ref{CP}) in terms of vector fields as follows. 
Consider a matrix-valued function $x\rightarrow (v_{ji})^{n,m}(x),~1\leq j\leq n,~0\leq i\leq m$ on ${\mathbb R}^n$, and $m$ vector fields 
\begin{equation}
V_i=\sum_{j=1}^n v_{ji}(x)\frac{\partial}{\partial x_j},
\end{equation}
where $0\leq i\leq m$. 
Consider the 
Cauchy problem on $[0,\infty)\times {\mathbb R}^n$
\begin{equation}
	\label{projectiveHoermanderSystemgenx2}
	\left\lbrace \begin{array}{ll}
		\frac{\partial u}{\partial t}=\frac{1}{2}\sum_{i=1}^mV_i^2u+V_0u,\\
		\\
		u(0,x)=f(x).
	\end{array}\right.
\end{equation}
In this section we consider smooth vector fields, i.e., $v_{ji}\in C^{\infty}\left(\left[0,\infty \right)\times {\mathbb R}^n \right)$. 
Define for all $x\in {\mathbb R}^n$
\begin{equation}\label{Hoergenx}
\begin{array}{ll}
H_x:=\mbox{span}{\Big\{} &V_i(x), \left[V_j,V_k \right](x), \\
\\
&\left[ \left[V_j,V_k \right], V_l\right](x),\cdots |1\leq i\leq m,~0\leq j,k,l\cdots \leq m {\Big \}}.
\end{array}
\end{equation}
Let
\begin{equation}
H:=\left\lbrace x\in {\mathbb R}^n|H_x={\mathbb R}^n\right\rbrace 
\end{equation}
be the set of points where the H\"{o}rmander condition holds. A subspace of $H_x$ which is induced by a set of vectors $A_1(x),\cdots A_p(x)$ will be denoted by $H_x\left[A_1,\cdots,A_p \right]$. It is clear how this classically formulated condition may be reformulated in the context of SDEs related to diffusion processes $X_t$. In this context we speak of the H\"{o}rmander condition related to the process $X_t$ (you may find this in \cite{KS}). 
\begin{remark}\label{degrem}
In the followin as usual we say that $f:{\mathbb R}^n\rightarrow {\mathbb R}$ is $C^{\infty}$ at $x$ if partial derivatives of arbitrary order exist at $x\in {\mathbb R}^n$. For the sake of simplicity we state the theorem in the case where $\Omega\subseteq {\mathbb R}^n$ is a domain in ${\mathbb R}^n$. In finance there may be situations where points of lower regularity of the datat may be located on a lower-dimensional manifold. An extended version of the following theorem may be stated  using the word 'domain relative to some subspace of ${\mathbb R}^n$'  for an set $\Omega\subseteq {\mathbb R}^m,~m\leq n$, which is open with respect to the relative topology of ${\mathbb R}^m$, and such that $\partial \Omega=\partial \left({\mathbb R}^m\setminus \overline{\Omega} \right)$.  Here $\overline{\Omega}$ denotes the closure and $\partial \Omega$ denotes the boundary of $\Omega$. Note that a domain relative to some subspace of ${\mathbb R}^n$ is not necessarily connected (this is also true for usual domains). Furthermore we say that a parabolic operator of the form (\ref{CP}) degenerates in the complement of a domain $\Omega$ relative to some subspace of ${\mathbb R}^n$ if $\sigma\sigma^T\equiv 0$ on ${\mathbb R}^n\setminus \Omega$. Analogously for the reformulations of the equation (\ref{CP}) in (\ref{projectiveHoermanderSystemgenx2}). 
\end{remark}

Next we state
 
\begin{theorem}
	\label{mainthm}
	Let $1\leq p\leq \infty$ and let $v_{ji}\in C^{\infty}\left(\left[0,\infty \right)\times {\mathbb R}^n \right)$ for $1\leq i\leq m$ and $1\leq j\leq n$. Assume that the set $\Omega \subseteq H\subseteq {\mathbb R}^n$, where $\Omega \subseteq {\mathbb R}^m$ is a domain, and $H$ is the set of points in ${\mathbb R}^n$, where the H\"{o}rmander condition holds.  Assume either that (A) is satisfied or the parabolic operator in (\ref{CP}) degenerates in the complement of a domain $\Omega$ (cf. remark \ref{degrem}), and that the initial data function $f:{\mathbb R}^n\rightarrow {\mathbb R}$ satisfies
	\begin{equation}\label{payoff}
	\begin{array}{ll}
		(i) & ~x\rightarrow f(x)\exp(-C|x|)\in L^p\left({\mathbb R}^n\right) \mbox{ for some $C>0$},\\
		\\
		(ii) & \Omega\cup \left\lbrace x|~f~\mbox{is}~C^{\infty}~\mbox{at}~x\right\rbrace ={\mathbb R}^n.
	\end{array}
\end{equation}
Then the Cauchy problem (\ref{projectiveHoermanderSystemgenx2})
	on $[0,\infty)\times {\mathbb R}^n$ has a global classical solution $u$, where
\begin{equation}
u\in C^{\infty}\left(\left(0,\infty\right) \times {\mathbb R}^n \right). 
\end{equation}

\end{theorem}

\begin{proof}
We povide the proof in the case where $H$ is a domain in ${\mathbb R}^n$ and where the operator degenerates in the complement of $H$. This is the case which most often occurs in practice. The proof can easily extended to the case where $m\leq n$, or an additional assumption (A) is satsified.
First we observe that it is sufficient to prove the theorem under the stronger assumption of a payoff $f\in C_0$ as discussed in the introduction. First note that we can transform the original Cauchy problem for $u$ to a problem for
\begin{equation}\label{transa}
	\tilde{u}:=e^{-d(x)}u:=e^{-\sqrt{a+q|x|^2}}u
\end{equation}
for some $a>0$, $q>C^2$, and where $|.|$ denotes the Euclidean norm. The Cauchy problem (\ref{projectiveHoermanderSystemgenx2}) is equivalent to a problem of the form
\begin{equation}
	\label{projectiveHoermanderSystemgenx2eq}
	\left\lbrace \begin{array}{ll}
		\frac{\partial u}{\partial t}=\frac{1}{2}\sum_{i,j=1}^n a_{ij}\frac{\partial^2}{\partial x_i\partial x_j}u+\sum_{i=1}^nb_i\frac{\partial u}{\partial x_i},\\
		\\
		u(0,x)=f(x),
	\end{array}\right.
\end{equation}
where $\left( a_{ij}\right)=\sigma\sigma^T\geq 0$
Then $\tilde{u}$ solves an equivalent problem with identical diffusion term but transformed drift vector $\tilde{{\bf b}}:={\bf b}-\frac{1}{2}\nabla d\cdot \sigma\sigma^T$ and an additional potential term $\tilde{c}:=c+{\bf b}\cdot \nabla d-\frac{1}{2}\mbox{tr}\left(\sigma\sigma^T\right)D^2 d-\frac{1}{2}|\nabla d\sigma|^2$. Here $D^2d$ denotes the Hessian of the function $d$ and $\mbox{tr}$ denotes the trace of a matrix. This amounts to a shift of the drift and an additional potential term $c$. The latter is not decisive for the result nor is the shift of the drift. Note that the set where the H\"{o}rmander condition holds may be altered although the difference can only due to the commutators where the drift is involved. We denote the set where the H\"{o}rmander condition holds for the transformed equation by $\tilde{H}$. We observe
\begin{lemma}
There is a transformation close to the transformation (\ref{transa}) such that
\begin{equation}
H= \tilde{H}.
\end{equation}
\end{lemma}
\begin{proof}
We have
\begin{equation}
 \frac{1}{2}\nabla d\cdot \sigma\sigma^T(x) \in H_x\left[V_1,\cdots ,V_m\right]=H_x\left[\sigma_1,\cdots ,\sigma_n\right]
\end{equation}
for each $x\in H$.
\end{proof}

Then we have
\begin{lemma}\label{translemma}
It suffices to show that there exists a solution $\tilde{u}$  of the Cauchy problem
\begin{equation}
	\label{projectiveHoermanderSystemgenx2eqsimpl}
	\left\lbrace \begin{array}{ll}
		\frac{\partial \tilde{u}}{\partial t}=\frac{1}{2}\sum_{i,j=1}^n a_{ij}\frac{\partial^2}{\partial x_i\partial x_j}\tilde{u}+\sum_{i=1}^n\tilde{b}_i\frac{\partial \tilde{u}}{\partial x_i},\\
		\\
		\tilde{u}(0,x)=\tilde{f}(x),
	\end{array}\right.
\end{equation}
on $[0,\infty)\times {\mathbb R}^n$, and where the payoff $\tilde{f}$ satisfies
\begin{equation}\label{payoff2}
	\begin{array}{ll}
		(i) & ~x\rightarrow \tilde{f}(x)\in L^p\left({\mathbb R}^n\right) \mbox{ for some $C>0$},\\
		\\
		(ii) & \tilde{H}\cup \left\lbrace x|~\tilde{f}~\mbox{is}~C^{\infty}~\mbox{at}~x\right\rbrace ={\mathbb R}^n.
	\end{array}
\end{equation}
 has a global classical solution $u$, where
\begin{equation}
\tilde{u}\in C^{\infty}\left(\left(0,\infty\right) \times {\mathbb R}^n \right). 
\end{equation}
\end{lemma}
\begin{proof}
If $\overline{X}$ solves
\begin{equation}
d\overline{X}_t=\tilde {\bf b}\left(\overline{X}_t\right) dt+\sigma \left(\overline{X}_t\right)dW_t,~\overline{X}_0=x.
\end{equation}
then the Feynman-Kac formula tells us that it suffices to prove the regularity of the function
\begin{equation}\label{exp}
(t,x)\rightarrow 
E^x\left(\exp\left(-\int_0^tc\left(\overline{X}_s\right)ds \right) f\left(\overline{X}_t\right) \right) 
\end{equation}
Then we may differentiate with respect to $x$ using the product rule, the  chain rule and compute 
the equation for the matrix-valued process $\frac{\partial}{\partial x_j}\overline {X}=Y^{x}_{ij}$ to be
\begin{equation}
\frac{\partial}{\partial x_j}\overline {X}=Y^{x}_{ij}=\delta_{ij}+\sum_{k=1}^n\int_0^t\frac{\partial}{\partial x_k}\left(\tilde{b_i}\right) Y^{k}_j(s)ds+\sum_{l,k=1}^n\int_0^t\frac{\partial}{\partial x_k}\sigma_{il}(X_s)Y^k_j(s)dW^l_s.
\end{equation}
Since a strong solution $Y^x_i$ of the latter SDE exists, the first derivative of (\ref{exp}) exists. Similarly for higher derivatives.  
\end{proof}

The following argument then proves the statement of \eqref{translemma}.
\begin{remark}
	Note that we may choose $q>C^2>0$ such that the initial data decay exponentially as $|x|\uparrow \infty$. From now on we assume that $q$ is chosen in this way.
\end{remark}  

\subsection{Existence of the Vector Field}

First we have
\begin{proposition}\label{prop}
 Assume that $\mu_i\in C^1_b\left([0,\infty)\times {\mathbb R}^n \right) $ and let $g\in C^1_b\left([0,\infty) \times {\mathbb R}^n \right) $. Then there exists a smooth global flow ${\cal F}^t$ generated by the vector field 
\begin{equation}\label{vec1*}
\sum_{i=1}^n\mu_i(x)\frac{\partial}{\partial x_i}
\end{equation}
 on $[0,\infty)\times {\mathbb R}^{n}$ such that the first order equation problem
\begin{equation}\label{vecf}
\begin{array}{ll}
\frac{\partial u}{\partial t}=\sum_{i=1}^n\mu_i(x^{n})\frac{\partial}{\partial x_i}u+g(x^{n}),\\
\\
~~u(0,x^{n})=f(x^{n}),
\end{array}
\end{equation}
has the solution
\begin{equation}\label{sol}
u(t,x^{n})=f\left({\cal F}^t x^{n}\right)+\int_0^tg({\cal F}^{t-s}x^{n})ds. 
\end{equation}

\end{proposition}

\begin{proof}
Consider the characteristic form
\begin{equation}
\chi_L(z,\xi)=\xi_0-\sum_{i=1}^n\mu_i\xi_i
\end{equation}
of the operator $L\equiv\frac{\partial}{\partial t}-\sum_{i=1}^n\mu_i \frac{\partial}{\partial x_i}$, where $\xi = (\xi_{0},\xi_{1},\ldots,\xi_{n})$. The surface $S:=\left\lbrace t=0 \right\rbrace$ has a constant normal vector $(1,0,\cdots ,0)$, hence is non-characteristic for the surface $S$, i.e. at any point $z=(t,x)$ we have 
\begin{equation}
(1,0,\cdots ,0)\notin \mbox{char}_z(L):=\left\lbrace \xi\neq 0|\xi_0-\sum_{i=1}^n \mu_i\xi_i=0 \right\rbrace \text{.}
\end{equation}
Hence, basic PDE-theory tells us that the first order Cauchy problem has a unique local solution in a sufficiently small neighborhood of the surface $S$ and is given in the form of solutions of associated ODEs along its characteristic curves. This leads to a solution up to a time $T_1$. Next we may iterate the argument in time. Assume that this does not lead to a global solution but to a limit $T_{\infty}>0$. Then on the time horizon $\left[0,T_{\infty}\right]$ we have a classical solution. Moreover the solution has a representation on this horizon as a family of ODE-solutions along characteristic curves, and where the assumptions on the coefficients	  imply that this family of solutions is uniformly bounded up to time $T_{\infty}$. Hence we may apply the first order PDE argument above again for the Cauchy problem with initial data $S_{T_{\infty}}:=\left\lbrace t=T_{\infty} \right\rbrace$ and extend the solution beyond the horizon $\left[0,T_{\infty}\right]$.     Hence there is a unique global solution. 
For each $x_0^{n}\in {\mathbb R}^{n}$ the flow ${\cal F}_t$ of the vector field $\sum_i \mu_i\frac{\partial}{\partial x_i}$ defines a characteristic curve $x_0^{n}(t):={\cal F}_tx_0^{n}$. Note that
\begin{equation}
{\cal F}^t x^{n} 
\end{equation}
is a solution of the homogeneous Cauchy problem
\begin{equation}\label{vecfhom}
\begin{array}{ll}
\frac{\partial u}{\partial t}=\sum_{i=1}^n\mu_i(x)\frac{\partial}{\partial x_i}u,\\
\\
~~u(0,x^n)=f(x^{n}),
\end{array}
\end{equation} 
and then the form of the solution (\ref{sol}) of the inhomogenous equation follows from Duhamel's principle.
\end{proof}

\subsection{Construction of the solution via an AD-Scheme}

Next we note that on the relative domain $H$ the Kusuoka-Stroock estimates hold. We have

\begin{theorem}
Let the assumption of (\ref{mainthm}) be satisfied and let $T>0$. 
Then the law of the diffusion process $X$ exists on a domain $\Omega\subseteq {\mathbb R}^n$ is absolutely continuous with respect to the Lebesgue measure, and the density $p$ exists and is smooth, i.e., on a domain $\Omega\subseteq {\mathbb R}^n$ we have
\begin{equation}
\begin{array}{ll}
p:(0,T]\times \Omega\times \Omega\rightarrow {\mathbb R}\in C^{\infty}\left( (0,T]\times \Omega\times \Omega\right). 
\end{array}
\end{equation}
Moreover, for each nonnegative natural number $j$, and multi-indices $\alpha,\beta$ there are increasing functions of time
\begin{equation}\label{constAB}
A_{j,\alpha,\beta}, B_{j,\alpha,\beta}:[0,T]\rightarrow {\mathbb R},
\end{equation}
and functions
\begin{equation}\label{constmn}
n_{j,\alpha,\beta}, 
m_{j,\alpha,\beta}:
{\mathbb N}\times {\mathbb N}^d\times {\mathbb N}^d\rightarrow {\mathbb N},
\end{equation}
such that 
\begin{equation}\label{pxest}
{\Bigg |}\frac{\partial^j}{\partial t^j} \frac{\partial^{|\alpha|}}{\partial x^{\alpha}} \frac{\partial^{|\beta|}}{\partial y^{\beta}}p(t,x,y){\Bigg |}\leq \frac{A_{j,\alpha,\beta}(t)(1+x)^{m_{j,\alpha,\beta}}}{t^{n_{j,\alpha,\beta}}}\exp\left(-B_{j,\alpha,\beta}(t)\frac{(x-y)^2}{t}\right) 
\end{equation}
Moreover, all functions (\ref{constAB}) and  (\ref{constmn}) depend on the level of iteration of Lie-bracket iteration at which the H\"{o}rmander condition becomes true.
\end{theorem}
For a proof consider \cite{KS}.

Next we define a scheme for the constructive solution of the Cauchy problem. It is convenient to define the scheme time-step by time-step on domains $[l-1,l]\times {\mathbb R}^n,~l\geq 1$. We introduce the time transformation
\begin{equation}
t=\rho \tau,
\end{equation}
where $\rho>0$ will be a small number. The transformed solution to the Cauchy problem
\begin{equation}
u^{\rho}(\tau,x)=u(t,x)
\end{equation}
satisfies
\begin{equation}
	\label{projectiveHoermanderSystemgenx2rho}
	\left\lbrace \begin{array}{ll}
		\frac{\partial u^{\rho}}{\partial \tau}=\rho\frac{1}{2}\sum_{i=1}^mV_i^2u^{\rho}+\rho V_0u^{\rho}\\
		\\
		u^{\rho}(0,x)=f(x),
	\end{array}\right.
\end{equation}
or, equivalently,
\begin{equation}
	\label{projectiveHoermanderSystemgenx2eqrho}
	\left\lbrace \begin{array}{ll}
		\frac{\partial u^{\rho}}{\partial \tau}=\rho\frac{1}{2}\sum_{i,j=1}^n a_{ij}\frac{\partial^2}{\partial x_i\partial x_j}u^{\rho}+\rho \sum_{i=1}^nb_i\frac{\partial u^{\rho}}{\partial x_i},\\
		\\
		u^{\rho}(0,x)=f(x).
	\end{array}\right.
\end{equation}
Note that the vector field
\begin{equation}\label{tildev}
\tilde{V}_0:=\sum_{i=1}^nb_i\frac{\partial}{\partial x_i}
\end{equation}
is not identical with $V_0$ in general and is denoted by (\ref{tildev}) henceforth.
The restriction of $u^{\rho}$ to the domain $[l-1,l]\times {\mathbb R}^n$ is denoted by $u^{\rho,l}$.
The solution is described by an iterative scheme for $u^{\rho,k,l}$ such that the solution has the form
\begin{equation}\label{funcseries}
u^{\rho ,l}(\tau,x)=u^{\rho,0,l}(\tau ,x)+\sum_{k\geq 1}\delta u^{\rho ,k,l}(\tau ,x),
\end{equation} 
where $\delta u^{\rho,k,l}=u^{\rho,l}-u^{\rho,k-1,l}$ and $k\geq 1$ is the iteration index at each time step $l\geq 1$. The representation in (\ref{funcseries}) is useful for proving convergence. For a small time step size $\rho>0$ we show at each time step $l\geq 1$
\begin{equation}
|\delta u^{\rho,k,l}|_{1,2}\leq c|\delta u^{\rho,k-1,l}|_{1,2}
\end{equation}
for some $c<1$, and then we use the semi-group property of the operator. Next we define the scheme at each time step $l\geq 1$. Let $(H_j)_{j\in J}$ denote the connected components domains of the domain $H$ (themselves each a domain), and let $\left( B^{\epsilon}_i\right)_{i\in I}$ be an open covering of ${\mathbb R}^n\setminus H$. We may assume that the both coverings are locally finite. At each time step we assume that the data $u^{\rho,l-1}(l-1,.)$ are given, where for $l=1$ we define $u^{\rho,0}(0,.)=f(.)$. For each $j\in J$ we construct
\begin{equation}
u^{\rho,0,l}_{H_j}:[l-1,l]\times H_j\rightarrow {\mathbb R}
\end{equation}
as a solution of a problem on $[l-1,l]\times H_j$, which is 
\begin{equation}
	\label{projectiveHoermanderSystemgenx2rhoH_j}
	\left\lbrace \begin{array}{ll}
		\frac{\partial u^{\rho ,0,l}_{H_j}}{\partial \tau}=\rho\frac{1}{2}\sum_{i=1}^mV_i^2u^{\rho ,0,l}_{H_j}+\rho V_0u^{\rho ,0,l}_{H_j},\\
		\\
		u^{\rho ,0,l}_{H_j}(l-1,x)=\chi_{H_j}(x)u^{\rho,l-1}(l-1,x) \mbox{ for }x\in H_j,
	\end{array}\right.
\end{equation}
where
\begin{equation}
\chi_{H_j}(x):=\left\lbrace \begin{array}{ll}
1 \mbox{ if }x\in H_j\\
\\
0 \mbox{ if }x\not\in H_j
\end{array}\right.
\end{equation}
denotes the characteristic function of $H_j$. Note that we have not imposed boundary conditions at $[l-1,l]\times \partial H_j$. We choose a simple solution. Since the H\"{o}rmander condition is satisfied on $H_j$ there is a density $p^l_j$ for the first equation of (\ref{projectiveHoermanderSystemgenx2rhoH_j}), and we choose
\begin{equation}
u^{\rho ,0,l}_{H_j}(\tau,x)=\int_{H_j}u^{\rho,l-1}(l-1,y)p^l_j(t,x,y)dy.
\end{equation}
Next for $i\in I$ and $B^{\epsilon}_i\cap H=\oslash$ or $u^{\rho ,0,l}_{H_j\cap B^{\epsilon}_i}(l-1,.)$ is not regular on $B^{\epsilon}_i\cap H\neq B^{\epsilon}_i$, then we define $u^{\rho ,0,l}_{H_j}(\tau,x)$ to be a solution of 
\begin{equation}
	\label{projectiveHoermanderSystemgenx2eqrhovec}
	\left\lbrace \begin{array}{ll}
		\frac{\partial u^{\rho,0,l}_{B^{\epsilon}_i}}{\partial \tau}=\rho \tilde{V}_0 u^{\rho ,0,l}_{B^{\epsilon}_i},\\
		\\
		u^{\rho ,0,l}_{B^{\epsilon}_i}(l-1,x)=\chi_{B^{\epsilon}_i}u^{\rho,l-1}(l-1,x).
	\end{array}\right.
\end{equation}
If $i\in I$ and $B^{\epsilon}_i\neq B^{\epsilon}_i\cap H\neq \oslash$, and $u^{\rho ,0,l}_{H_j\cap B^{\epsilon}_i}(l-1,.)=u^{\rho ,l-1}_{H_j\cap B^{\epsilon}_i}(l-1,.)$ is regular on $B^{\epsilon}_i\cap H\neq \oslash$, then we define
\begin{equation}
\label{projectiveHoermanderSystemgenx2eqrhovec2}
	\left\lbrace \begin{array}{ll}
		\frac{\partial u^{\rho,0,l}_{B^{\epsilon}_i}}{\partial \tau}-\rho \sum_{i=1}^n\tilde{V}_0 u^{\rho ,0,l}_{B^{\epsilon}_i},\\
		\\
		=\rho\frac{1}{2}\sum_{i=1}^mV_i^2u^{\rho ,l-1}_{H_j\cap B^{\epsilon}_i}(l-1,.)+\rho V_0u^{\rho ,l-1}_{H_j\cap B^{\epsilon}_i}(l-1,.),\\
		\\
		u^{\rho ,0,l}_{B^{\epsilon}_i}(l-1,x)=\chi_{B^{\epsilon}_i}u^{\rho,l-1}(l-1,x),
	\end{array}\right.
\end{equation}
where $u^{\rho ,l-1}_{H_j\cap B^{\epsilon}_i}(l-1,.)$
 denotes the restriction of $u^{\rho ,l-1}$ to $H_j\cap B^{\epsilon}_i$, and is evaluated at $(l-1,.)$ with time $\tau=l-1$. Note that we have defined a family of functions $u^{\rho ,0,l}_{H_j},~j\in J$ and $u^{\rho ,0,l}_{B^{\epsilon}_i},ĩ\in I$ with some overlap for $i\in K:=\left\lbrace i\in I|B^{\epsilon}_i\neq B^{\epsilon}_i\cap H\neq \oslash\right\rbrace$. We define $u^{\rho,0,l}(\tau,x):=u^{\rho ,0,l}_{H_j},~j\in J$
and define $u^{\rho,0,l}$ on this set such that for all $(\tau,x)\in [l-1,l]\times ({\mathbb R}^n\setminus H)$ we have $u^{\rho,0,l}(\tau,x)=u^{\rho,0,l}_{B^{\epsilon}_i}(\tau,x)$ for some $i$. Next let $u^{\rho,0,l}_{\delta}$ a mollification of $u^{\rho,0,l}$ which converges  with respect to the $L^{\infty}$-norm to the latter function.
 
 Next for $k\geq 0$ we define the local corrections $\delta u^{\rho,k,l}$ at time step $l$. 
 We define
 \begin{equation}
 \delta u^{\rho,0,l}=u^{\rho,0,l}-u^{\rho,l-1}(l-1,.),
 \end{equation}
and 
 \begin{equation}
 \delta u^{\rho,0,l\delta}=u^{\rho,0,l}_{\delta}-u^{\rho,l-1}(l-1,.),
 \end{equation}
 where $u^{\rho,l-1}(l-1,.)$ turns out to be regular inductively.
Next for $j\in J$ on $[l-1,l]\times H_j$ the restriction $\delta u^{\rho,k,l}_{H_j}$ is a solution of
\begin{equation}
	\label{projectiveHoermanderSystemgenx2rhoH_jdelta}
	\begin{array}{ll}
		\frac{\partial \delta u^{\rho ,k,l}_{H_j}}{\partial \tau}-\rho\frac{1}{2}\sum_{i=1}^mV_i^2\delta u^{\rho ,k,l}_{H_j}-\rho V_0 \delta u^{\rho ,k,l}_{H_j}\\
		\\
		=\sum_{j\in J_k}\rho\frac{1}{2}\sum_{i=1}^mV_i^2\delta u^{\rho ,k-1,lj}_{H_j\cap B^{\epsilon}_k}+\rho V_0\delta u^{\rho ,k-1,lj}_{H_j\cap B^{\epsilon}_k}\\
		\\
		+\sum_{q\in I_k}\rho \sum_{i=1}^nV_0\delta u^{\rho ,k-1,lq}_{B^{\epsilon}_q},
	\end{array}
\end{equation} 
where $J_k=\left\lbrace j|H_j\cap B^{\epsilon}_k\right\rbrace$ and
 $I_k=\left\lbrace i|B^{\epsilon}_i\cap B^{\epsilon}_k\right\rbrace\setminus J_k$.
We choose the solution which is determined by the fundamental solution $p^l_j$ of 
\begin{equation}
 \frac{\partial p}{\partial \tau}-\rho\frac{1}{2}\sum_{i=1}^mV_i^2p-\rho V_0p=0
\end{equation}
on $[l-1,l]\times H_j$, i.e., we define
\begin{equation}
\delta u^{\rho ,k,l}_{H_j}(\tau,x):=\int_{l-1}^{\tau}\int_{H_j}\sum_{i\in J_j}L_{H_j\cap B^{\epsilon}_i}
		\delta u^{\rho ,k-1,l}_{B^{\epsilon}_i}(s,y)p^l_j(\tau -s,x,y)dyds,
\end{equation}
where we use the abbreviation $L_{H_j\cap B^{\epsilon}_i}$ which is defined implicitly via (\ref{projectiveHoermanderSystemgenx2rhoH_jdelta}) in an obvious way.
Next we define for $k\in I$, 
\begin{equation}
\label{projectiveHoermanderSystemgenx2eqrhovec3}
	\begin{array}{ll}
		\frac{\partial \delta u^{\rho,k,l}_{B^{\epsilon}_k}}{\partial \tau}-\rho \sum_{i=1}^n\tilde{V}_0\delta u^{\rho ,k,l}_{B^{\epsilon}_k},\\
		\\
		=\sum_{j\in J_k}\rho\frac{1}{2}\sum_{i=1}^mV_i^2\delta u^{\rho ,k-1,lj}_{H_j\cap B^{\epsilon}_k}+\rho V_0\delta u^{\rho ,k-1,lj}_{H_j\cap B^{\epsilon}_k}\\
		\\
		+\sum_{q\in I_k}\rho V_0\delta u^{\rho ,k-1,lq}_{B^{\epsilon}_q}.
	\end{array}
\end{equation}
A solution of the latter equation can be proved as on (\ref{prop}). Then from these families of resticted functions define a function $\delta u^{\rho,k,l}\in L^{\infty}$ analogously as in the definition of $u^{\rho,0,l}$ above, and let $\delta u^{\rho,k,l\delta}$ its mollification with the same mollification operator.
Next we prove that the scheme above converges to a classical solution. For each $l\geq 1$ we define a weighted Sobolev space which is adapted to the Kusuoka-Stroock estimates. We define
\begin{definition}
For any $l\geq 1$ and  consider for $m,p$ the function space
\begin{equation}
\begin{array}{ll}
H^{m,p,l,q}\left([l-1,l]\times {\mathbb R}^n\right):=\\
\\
{\Big \{} f:[l-1,l]\times {\mathbb R}^n \rightarrow 
{\mathbb R}|~(\tau,x)\rightarrow (\tau-(l-1))^p f(\tau,x)\exp\left(-|x|\right) \in H^{m,q}{\Big \}}.
\end{array}
\end{equation}
where $H^{m,q}$ is the Sobolev space with weak spatial derivatives of order $|\alpha|\leq m$ in $L^2$ and weak time derivatives of order $r\leq q$ in $L^2$ (on the domain $[l-1,l]\times {\mathbb R}^n$. We denote the associated norm by $|.|_{m,p,l,q}$
\end{definition}
Starting with step $l=1$ we first observe that $u^{\rho ,0,1}_{H_j}$ and $u^{\rho,0,1}_{B_{\epsilon}}$ are smooth. For given $m$ and $q$ and $l>0$ we choose the time weight $p=q+\max_{j\leq q,|\alpha|\leq m}n_{j,\alpha,\alpha}$. Then there exists $\rho >0$ such that
\begin{equation}
|\delta u^{\rho ,k,1\delta}|_{m,p,1,q}\leq \frac{1}{2}|\delta u^{\rho ,k-1,1\delta}|_{m,p,1,q}.
\end{equation}
For the limit function we have $u^{\rho,1\delta}=u^{\rho ,0,1\delta}+\sum_{k=1}^{\infty}\delta u^{\rho ,k,1\delta}\in H^{m,p,l,q}\left(0,1]\times {\mathbb R}^n\right)$ by construction. Hence for $m\geq k+\frac{1}{2}n$ and for fixed $\tau\in (0,1]$ we have
$u^{\rho,1\delta}(\tau,.)\in H^m\subset C^k$ by the  Sobolev lemma. It follows that $u^{\rho,l\delta}\in C^{1,2}\left((l-1,l]\times {\mathbb R}^n\right)$. For the limit $\delta\downarrow 0$ elementary but cumbersome calculations lead to the concusion that $u^{\rho,1}=\lim_{\delta\downarrow 0}u^{\rho,1\delta}$ is H\"{o}lder continuous with respect to the spatial variables. This can be repeated for the spatial derivatives of $u^{\rho,l\delta}$ leading to the conclusion that $u^{\rho,l\delta}\in C^{1,2}\left((l-1,l]\times {\mathbb R}^n\right)$. For $l\geq 2$ having constructed $u^{\rho,l-1}\in H^{m,p,l,q}\left([l-2,l-1]\times {\mathbb R}^n\right)$  we may  inductively using the transformation of the data above if necessary. 
The argument for spatial derivatives of higher order is similar. Smoothness with respect to time on $(l-1,l]\times {\mathbb R}^n$ for $l\geq 1$ follows from smoothness of the solution function $u^{\rho,l}$ with respect to the spatial variables.  
\end{proof}

\section{Generalization to Lipschitz-continuous coefficients which satisfy a weak H\"{o}rmander condition }
The class of diffusions which satisfy the H\"{o}rmander condition is too narrow for many applications. Especially, dispersion coefficients $\sigma$ (resp. $\sigma\sigma^T$) may be Lipschitz continuous only. Theorem \ref{mainthm} may be applied to classical reduced LIBOR models, but typical extensions with stochastic volatility are not subsumable. However, in our construction above we may apply the partial integration formula of Malliavin calculus for subproblems where a density exists, and this requires essentially that the Malliavin covariant matrix is $L^p$ invertable for all $1\leq p<\infty$. For this reason  
we say that coefficient functions of vector fields satisfy a weak H\"{o}rmander condition on a domain $H\subset {\mathbb R}^n$ if they satisfy the classical H\"{o}rmander condition on a dense set in $H$ and if the Malliavin covariant matrix is $L^p$-invertible for all $1\leq p<\infty$. The latter requirement is quite natural from the point of view of our construction in theorem \ref{mainthm} above, since we assumed the existence of a density on the subdomain $H$, or, to say it differently, the existence of a density on the subdomain $H$ is a consequence of the assumptions of theorem \ref{mainthm} which follows from H\"{o}rmander's theorem.  Note that the typical stochastic volatility models satisfy a stronger form of this weak condition. We may say that coefficient functions of vector fields satisfy a strong form of the weak H\"{o}rmander condition on a domain $H\subset {\mathbb R}^n$ if they satisfy the classical H\"{o}rmander condition almost everywhere in $H$. If we consider weak H\"{o}rmander conditions then we cannot expect to have classical solutions in general (although we shall observe that solution function are of class $C^1$ for a considerable class of problems). Nevertheless, we may adopt the concept of a viscosity solution to the class of equations considered here. We define
\begin{definition}
We say that $u:D:=\left(0,\infty\right)\times {\mathbb R}^n$ with $u(0,x)=h(x)$ is a strong viscosity solution of the ultraparabolic Cauchy problem if 
$u\in C^1\left( \left[0,\infty\right)\times {\mathbb R}^n\right) $ on ${\mathbb R}^n\setminus H$ (where $H$ is the domain where the H\"{o}rmander condition holds), and $u$ is a viscosity solution on $H$ in the traditional sense, i.e. for all $\phi \in P^{2,+}(D)$ (resp. $P^{2,-}(D)$) along with the parabolic upper semijet $P^{2,+}(D)$ (resp. lower semijet $P^{2,-}(D)$) the relations
\begin{equation}
\frac{\partial \phi}{\partial t}-\mbox{Tr}\left( \sigma\sigma^TD^2\phi\right)-\sum_{i=1}^nb_i\frac{\partial \phi}{\partial x_i}\leq (\geq) 0
\end{equation}
are satisfied.
\end{definition}

In order to generalize theorem \ref{mainthm} we need an additional assumption concerning the coefficient functions. We assume 
\begin{theorem}
	\label{mainthmgen}
	Let $1\leq p\leq \infty$ and let $v_{ji}\in C^{\alpha}\left(\left[0,\infty \right)\times {\mathbb R}^n \right)$ for $1\leq i\leq m$ and $1\leq j\leq n$. Assume that the set $H\subseteq {\mathbb R}^n$ where the weak H\"{o}rmander condition holds is a domain and that the parabolic operator degenerates in the complement of the domain or assumption (A) is satisfied. Assume that the initial data function $f:{\mathbb R}^n\rightarrow {\mathbb R}$ satisfies
	\begin{equation}\label{payoff3}
	\begin{array}{ll}
		(i) & ~x\rightarrow f(x)\exp(-C|x|)\in L^p\left({\mathbb R}^n\right) \mbox{ for some $C>0$},\\
		\\
		(ii) & \overline{H}\cup \left\lbrace x|~f~\mbox{is}~C^{\infty}~\mbox{at}~x\right\rbrace ={\mathbb R}^n.
	\end{array}
\end{equation}
Then the Cauchy problem (\ref{projectiveHoermanderSystemgenx2})
	on $[0,\infty)\times {\mathbb R}^n$ has a global strong viscosity solution $u$, where
\begin{equation}
u\in C^{0,1}\left(\left(0,\infty\right) \times {\mathbb R}^n \right). 
\end{equation}
\end{theorem}
\begin{proof}
The plan of the proof is as follows. First we show that there is a sequence of $m$-tuples of vector fields $\left(W^n_1,\cdots W^n_m \right)$ which satisfy the H\"{o}rmander condition on the whole domain of $H$ and which converge uniformly, i.e. locally in the supremum norm, to the $m$-tuple of vector fields $(V_1,\cdots ,V_m)$. Furthermore, using the transformation used in the proof of theorem \ref{mainthm} above we may assume that $f\in L^p\left({\mathbb R}^n\right)$ for $p$ as in the statement of theorem \ref{mainthmgen}. Since the continuous functions with compact support are dense in $L^p\left({\mathbb R}^n\right)$ we may approximate $f$ by a series $f_m\in C^1$ converging in $L^p$. Then we consider Euler-scheme approximations of the diffusion corresponding to each approximating $m$-tuple $\left(W^n_1,\cdots W^n_m \right)$ and initial data $f_m\in C^1$ and use the chain rule and the derivative of the diffusion process with respect to the argument of the expectation value form of the solution function which can be constructed in this approximating situation from the functional series representation given in the proof of theorem \ref{mainthm} above. We compare this representation with an alternative approximations based on the partial integration formula of Malliavin calculus and conclude that the double limit $m\uparrow \infty$ and $n\uparrow \infty$ exists.
We start with the approximation of the vector fields $(V_1,\cdots ,V_m)$.
\begin{lemma}
Let $\left( W_1,\cdots, W_m\right) $ be an of m-tuple of vector fields which have bounded Lipschitz continuous coefficients. Let $H_W$ be the set where the entries $W_i,~1\leq i\leq n$ are smooth and a classical H\"{o}rmander condition holds and let $H^c_W=\overline{H_W}$ be the closure of $H_W$. Then there is a sequence of $m$-tuples of smooth vector fields  $\left( W^n_1,\cdots, W^n_m\right)_{n\in {\mathbb N}} $ which satisfy the H\"{o}rmander condition on $H^c_W$ and converge uniformly to the $m$-tuple $\left( W_1,\cdots, W_m\right) $.
\end{lemma}
\begin{proof}(lemma)
We start with an arbitrary $m$-tuple $\left( W^0_1,\cdots, W^0_m\right) $
 of smooth vector fields $W^0_j,~1\leq j\leq m$ which satisfies the H\"{o}rmander condition on a domain $U$ such that $H^c_W\subseteq U$ (equality may occur if $H^c_W$ is the whole space of ${\mathbb R}^n$). At each stage $q$ of our construction we construct an $m+1$-tuple $\left(W^q_0, W^q_1,\cdots, W^q_m\right) $ which satisfies the H\"{o}rmander condition at a set of points chosen from $$J^0_q=\left\lbrace \frac{r}{2^{q}}+I_{\frac{1}{2^{q+1}}},~r\in {\mathbb Z}^n\right\rbrace , $$ 
 where $\frac{r}{2^{q}}+I_{\frac{1}{2^{q+1}}}$ is the cube with mid point $\frac{r}{2^{q}}$ and edge size $\frac{1}{2^{q+1}}$.
  Note that $r=(r_1,\cdots ,r_n)^T$. 
  Let $J_q:=\left\lbrace U_k|U_k\in J^0_q \mbox{ and } U_k\subset H \right\rbrace=\left\lbrace U_k| k\in K_q\right\rbrace$ 
 for some index set $K_q$. The choice is not arbitrary. 
 Since the H\"{o}rmander condition holds on a dense subset of 
 $H$ we can choose a tuple $(x^q_j)_{j\in K_q}$ (of cardinality $|K_q|$) 
 such that $x^q_j\in U_j\in J_q$ 
 for all $j\in K_q$, and such that for each $j\in K_q$
 \begin{equation}\label{condmid}
 |x^q_j-r|=\max{i\in\left\lbrace 1,\cdots, n\right\rbrace}|x^q_{ji}-r_i|\leq \frac{1}{2^{q+2}},
 \end{equation}
and by induction we may assume that for all $1\leq p\leq q-1$
\begin{equation}\label{condstage}
 |x^q_j-x^p_{j}|=\max{i\in\left\lbrace 1,\cdots, n\right\rbrace}|x^q_{ji}-x^m_{ji}|\geq \frac{1}{2^{q+1}}
 \end{equation}
Note that the $x^q_j$ have distance greater $\frac{1}{2^{q+1}}$.   
The main idea is to add at each stage a function which does not alter the H\"{o}rmander condition of the points chosen at the previous stages of construction and such that the H\"{o}rmander condition is satisfied at the additional points chosen at the present stage.    
Assume that $\left(W^q_0, W^q_1,\cdots, W^q_m\right) $ has been constructed. Then we define in a first substage of stage $q+1$
\begin{equation}
 W^{q+1,0}_j=W^{q}_j+\sum_{k\in J_q}c^{k,q}_j\phi_{\frac{1}{ 2^{q+2}}}(x-x_j),
\end{equation}
for all $0\leq j\leq m$ where for each $\epsilon >0$
\begin{equation}
\phi_{\epsilon}(x)=e\cdot\exp\left(-\frac{\epsilon^2}{\epsilon^2-|x|^2} \right), 
\end{equation}
and where the constants $c^{k,q}_j=\left(c^{k,q}_{j1},\cdots ,c^{k,q}_{jn}\right) $ are determined by the relation
\begin{equation}
W^{q}_j(x_j)-W_j(x_j)=c^{k,q}_j\mbox{ for all }j\in J_n.
\end{equation}
In a second substage then we first determine the next set $(x^{q+1}_j)_{j\in K_{q+1}}$ with the properties in (\ref{condmid}) and in (\ref{condstage}) for $q+1$ instead of $q$. Then we know that
the distance of the points in the latter tuple greater than $\frac{1}{2^{q+2}}$ and that their distance to the points constructed at earlier stages is also greater than $\frac{1}{2^{q+2}}$. Then for each $j\in K_{q+1}$ let $B_{\frac{1}{2^{q+4}}}(x^{q+1}_j)$ be the ball of radius $\frac{1}{2^{q+4}}$ around the midpoint $x^{q+1}_j$. For each $j\in K_{q+1}$ and $1\leq i\leq m$ define a function $\psi^j_i\in C^{\infty}_c\left(B_{\frac{1}{2^{q+3}}}(x^{q+1}_j) \right)$, i.e. with support in $B_{\frac{1}{2^{q+3}}}(x^{q+1}_j)$ which equals $W_i-W^{q+1,0}_i$ on $B_{\frac{1}{2^{q+4}}}(x^{q+1}_j)$. Then define $W^{q+1}_i=W^{q+1,0}_i-\sum_{j\in K_{q+1}}\psi^j_i$ for all $1\leq i\leq m$.
This construction can be repeated arbitrarily often and has a limit $W$ with respect to the supremum norm.\end{proof}

Next we verify that the Kusuoka-Strook estimates of first order (in the probabilistic form) are stable in the situation of the preceding lemma.
This follows from the stability of the Malliavin partial integration formula. 
First we take a sequence of data $f_p\in C^1\cap L^p$ such that $\lim_{p\uparrow \infty} |f_p-f|_{L^p}=0$ and a sequence of $m$-tuple $(W^q)_{q\in {\mathbb N}}$ of $m$-tuples of vector fields such that $\lim_{q\uparrow \infty}|W^q_{ji}-v_{ji}|_0=0$ for all $j=1,\cdots m$ and $1\leq i\leq n$. Let $X^q$ the diffusion process with drift coefficients $b^q_i$ and dispersion coefficients $\sigma^{q}_{ij}$ corresponding to the $m$-tuple of vector fields $W^q$ such that $(t,x)\rightarrow E^x\left(f_m\left(X^q_t\right)\right)$ solves the Cauchy problem
 \begin{equation}
	\label{projectiveHoermanderSystemgenx22}
	\left\lbrace \begin{array}{ll}
		\frac{\partial u^q}{\partial t}
		=\frac{1}{2}\sum_{i=1}^m \left( W^q_i\right) ^2u^q
		+W^q_0u^q\\
		\\
		u^q(0,x)=f_m(x).
	\end{array}\right.
\end{equation}
We denote the limit process by $X$ with drift coefficients $b_i$ and dispersion coefficients $\sigma_{ij}$ corresponding to the $m$-tuple of vector fields $V.$  
We know from (\ref{mainthm}) that the function $(t,x)\rightarrow E^x\left(f_m\left(X^q_t\right)\right)$ is smooth for each $m,q\in{\mathbb N}$.
 Then we have
 \begin{equation}\label{exp2}
 \begin{array}{ll}
\frac{\partial}{\partial x_i}E^x\left( f_m\left(X^q_t \right)\right)=
E^x\left( \left( \frac{\partial}{\partial x_i}f_m\right) \left(X^q_t \right)Y^{q}_{ij}\right)
\end{array}
\end{equation}
Then we may differentiate with respect to $x$ and compute 
with the matrix-valued process $\frac{\partial}{\partial x_j}X^q=Y^{q}_{ij}$ to be
\begin{equation}
\begin{array}{ll}
\frac{\partial}{\partial x_j} X^q=Y^{q}_{ij}
=\delta_{ij}+\sum_{k=1}^n\int_0^t
\left( \frac{\partial}{\partial x_k}b^q_i(X^q_s)\right) 
Y^{k}_j(s)ds\\
\\+\sum_{l,k=1}^n\int_0^t\left( \frac{\partial}{\partial x_k}\sigma_{il}(X^q_s)\right) Y^k_j(s)dW^l_s.
\end{array}
\end{equation}
We approximate the processes $X^q$ and $Y^q$ by Euler schemes $X^{q,e_{\Delta}}$ and $Y^{q,e_{\Delta}}$ of time-step size $\Delta$. According to Rademacher's theorem the set $S$ of arguments where one of the Lipschitz continuous dispersion functions $\sigma_{ij}$ or one of the drift coefficient functions $b_i$ are non-differentiable is of measure zero. For $x\in S$ we replace the derivatives of $\sigma_{ij}$ and of $b_i$ in (\ref{exp2}) by difference quotients with difference $h>0$ and the the processes $X^q$ and $Y^q$ by Euler-scheme approximations $X^{q,e_{\Delta}}$ and $Y^{q,e_{\Delta}}$. As the difference quotients of  $\sigma_{ij}$ and of $b_i$ are uniformly bounded by a Lipschitz constant $M$ (independent of the difference $h$). Since the Euler schemes have values in $S$ of probability measure zero the limit with  $q\uparrow \infty$ and $\Delta, h\downarrow 0$ exists for each $m$. 
Next for $f_m\in C^1$ and the weight $W=\det\left(\gamma_{ij}\right)^2 $ along with the Malliavin covariance matrix $\gamma_{ij}$ we may rewrite the expectation with a changed measure such that the partial integration formula
\begin{equation}
 E\left(\frac{\partial}{\partial x_i}f_m(X)W\right)=E\left(f_m(X)H^i\left(X,W\right)\right)  
\end{equation}
 with 
 \begin{equation}\label{1M}
 H^i(X,W)=-\sum_{j=1}^nW\gamma^{-1}_XL(X^j)+\left\langle D X^j,D\left(W\gamma^{-1,ij}_X \right) \right\rangle 
 \end{equation}
is valid. Here we use the fact the covariance matrix is in $L^p$ and $L^p$-invertible for $p\geq 1$. Taking the limit $m\uparrow \infty$ leads to the result.
\end{proof}
\begin{remark}
It seems that with the methods of Malliavin calculus it is difficult to establish more regularity for diffusions which satisfy a weak H\"{o}rmander condition.
The difficulty is to establish the existence of higher order weights $H^{\alpha}\left(X,W\right)$ which appear in the Malliavin integration by parts formula. For higher derivatives this is
\begin{equation}\label{hM}
E\left(D^{\alpha}f\left(X\right),W\right)=E\left(f(X)H^{\alpha}\left(X,W\right)  \right)  
\end{equation}
with recursively define weights $H^{\alpha}$ such that
\begin{equation}
H^{(\alpha,\alpha_{m+1})}\left(X,W\right)=H^{\alpha_{m+1}}\left( X, H^{\alpha}\left(X,W \right) \right)  
\end{equation}
along with $(\alpha,\alpha_{m+1})=(\alpha_1,\cdots,\alpha_m,\alpha_{m+1})$.
\end{remark}
\section{Weighted Monte-Carlo algorithms related to a class of ultraparabolic diffusions}

Next we describe the probabilistic scheme related to the class of ultraparabolic equations described in this paper. Furthermore we compare the scheme with other probabilistic schemes proposed in the literature, especially the schemes in \cite{FriesKampenProxy2005}, \cite{FJ}, and \cite{KKS}.
Expectation values and their sensitivities of diffusion processes $X$ starting at $X_0=x_0$ of the form
\begin{equation}\label{origdiff}
X_t=x_0+\int_0^t\mu(s,X_s)ds+\int_0^t\sigma(s,X_s)dW_s
\end{equation}
are usually computed via discretizations of the process. For example in an Euler scheme with time discretization $t_0=0<t_1<\cdots <t_N=:T$ the expectation value
\begin{equation}\label{MCvalue}
E^{x_0}\left(f\left(X_T\right)  \right) 
\end{equation}
may be approximated by a number 
\begin{equation}\label{MCsum}
\frac{1}{N}\sum_{j=1}^Nf\left(X^e_T(\omega_j)\right),
\end{equation}
where $N$ is the number of paths denoted by $\omega_j$ and $X^e_T(\omega_j)$ refers to the evaluation of the Euler scheme $X^e$ at time $T$ and for the path $\omega_j$. Similar for sensitivities, which are derivatives of the expectation value function with respect to an argument (e.g. a component of $x_0$) or an parameter. Starting at $t_0$ with the vector $x_0$ at each time step $t_i$ a vector of random numbers with distribution of the random variable $\Delta W_{t_i}=W_{t_{i+1}}-W_{t_i}$ is drawn. The vector of random numbers drawn at time step $i$ for the $j$th path may denoted by $\Delta W_{t_i}(\omega_j)$ and determines the evaluation of the random variable $X^e_{t_{i+1}}(\omega_j)$, i.e., the value of the Euler scheme at the $i+1$ time step and at the $j$th path via
\begin{equation}
X^e_{t_{i+1}}(\omega_j)=X_{t_i}(\omega_j)+\mu\left( t_i,X^e_{t_i}(\omega_j)\right)\Delta t_i+\sigma\left( t_i,X^e_{t_i}(\omega_j)\right)\Delta W_{t_i}(\omega_j).
\end{equation}
Sometimes certain paths are more likely to contribute to the value of (\ref{MCsum}) than others, and in this case it may be an advantage to approximate the value (\ref{MCvalue}) by a weighted sum
\begin{equation}\label{MCsumweighted}
\frac{1}{N}\sum_{j=1}^Nw_jf\left(X^e_T(\omega_j)\right).
\end{equation} 
where each path is 'weighted' with the real number $w_j$. Such schemes are called weighted Monte-Carlo algorithms. The choice of the weight is an art in itself. For example, derivatives of option prices with respect to underlying, so-called $\Delta$s, may lead to the problem of simulating highly peaked distributions and to the choice of very special weights in order to make the computation stable (cf. \cite{KKS}). Now assume that the diffusion has a density and that there is a nice approximation of that density which can be used in order to evaluate a probabilistic scheme for a diffusion in one time step only. For example in \cite{KKS} weights for the computation of $\Delta$s for classical interest rate options with a maturity of $10$ years in one time step are computed in the framework of the classical LIBOR market model via WKB-expansions of densities $p$ of a diffusion $X$ evaluated at each path, i.e. WKB-approximations of the numbers
\begin{equation}
(t,x_0)\rightarrow p\left( t_0,x_0,T,X_T(\omega_j)\right).
\end{equation}
The WKB-approximations $p_{\mbox{WKB}}$ of the density $p$ define a target scheme $X^*$ which approximates the original diffusion $X$.
 The evaluation involves the idea of a full proxy scheme (cf. \cite{FriesKampenProxy2005}), where the random variable is evaluated with respect to an easily computable prior $X^0$ such that
 \begin{equation}
 E\left(f(X^*_T)|_{{\cal F}_{t_0}}\right)=E\left(f(X^0_T)W_T|_{{\cal F}_{t_0}}\right)
 \end{equation}
 along with
\begin{equation}
W_T=\frac{p_{\mbox{WKB}}\left( t_0,x_0,T,X^0_T(\omega_j)\right)}{\phi^e(t_0,x_0,T,X^0_{T}(\omega_j))},
\end{equation}
and where $\phi^e$ refers to the density of the Euler scheme (which is just a linear transformation of the Gaussian). We note that this choice of the weights is called the 'naive scheme' in (\cite{KKS}) for reason explained there, but in many cases it works. Similar for multiple time steps where weights are represented by products of weights computed at each time step. Now experience shows that the full factor LIBOR market model is not needed in order to match requirements of calibration and may be too cumbersome in order to do actual computations (maybe involving $>20$ underlyings). For this reason reduced LIBOR market models are constructed. 
Reduced LIBOR market models have no densities in general, and for this reason the scheme presented in \cite{KKS} has to be altered. If the Malliavin covariance matrix of the diffusion is $L^p$ invertible for all $p\geq 1$, then probabilistic schemes based on Malliavin weights may be considered (cf.\cite{EFT}). However, this is not true for the reduced LIBOR market model in general (with and without stochastic volatility). We may still use a density on the subspace where it exists, but we have to ensure that the target scheme does not attribute any mass whenever the prior has none. Note that in the case of stochastic volatility a simple Euler scheme for the prior may lead to the phenomenon that the prior density is supports only a subspace which may be of lower dimension than the subspace where a H\"{o}rmander condition holds. Note that the volatility matrix may be only of rank $k<n$ while the subspace where the H\"{o}rmander condition holds may be of rank $d>k$. From the perspective of the practical incentives of this paper our interest is the case where $d< n$. If $k$ denotes the rank of the diffusion matrix $\sigma\sigma^T$ we may have the situation $k<d<n$. For example this is true for $k=1$ $d=2$ and $n=3$ for the diffusion related to the equation
Consider the equation
\begin{equation}\label{examplehd}
\frac{\partial u}{\partial t}-\lambda_2\frac{\partial^2 u}{\partial x_2^2}+x_2\mu_1\frac{\partial u}{\partial x_1}+\mu_2\frac{\partial u}{\partial x_3}=0
\end{equation}
for some constants $\lambda_2,\mu_1,\mu_2>0$. 
\begin{equation}
H:=\mbox{span}\left\lbrace (0,\lambda_2,0)^T,(\lambda_2\mu_1,0,0\right\rbrace={\mathbb R}^2=H_x 
\end{equation}
independently of the argument $x$. Note that we have a density on the whole space for the reduced equation
\begin{equation}
\frac{\partial u}{\partial t}-\lambda_2\frac{\partial^2 u}{\partial x_2^2}+x_2\mu_1\frac{\partial u}{\partial x_1}=0,
\end{equation}
because the coefficients are of linear growth. In example (\ref{examplehd}) it is only the drift which contributes to the difference of the H\"{o}rmander dimension and the rank of the diffusion matrix. Note that it is possible that the dimension of $H_x$, i.e. the H\"{o}rmander space at some specific point $x\in {\mathbb R}^n$, is larger than the rank of the diffusion matrix, where the drift and all its derivatives are zero at this point. However, this set of degeneracies  should be $L^p$-invertible (for $p\geq 1$) in order to construct a (regular) density. Examples may be constructed with a more complicated interplay of spatial dependence of the diffusion matrix $\sigma\sigma^T$ and spatial dependence of the drift where significant differences $d-k\geq 2$ may occur in regions which are not of Lebesgue measure zero. For this reason it is a desideratum of present research to have numerical constructions of target densities which are supported on the whole of the H\"{o}rmander subspace - even for small time steps. It is clear that the WKB-expansion does not satisfy this strong requirement in general. Even in example \ref{examplehd} we may compute the WKB-expansion formally (with explicit solutions for the WKB-coefficients), but it is obvious that this leads to poor numerical target density schemes. 
partial proxy scheme in \cite{FJ} and  \cite{FriesLectureNotes2007}) can be extended.
For the usual proxy scheme the prior should be supported on $H$. Note that in this paper the domain $H$ is time-invariant. However, this is for the sake of simplified notation since the extension to the time-dependent case causes no further problems. 
Anyway, the theoretical considerations of this paper lead to an algorithmic frame (based on the constructive scheme above) which may be used in order to improve the existing schemes by adding correction terms which may be computed on the basis of Malliavin calculus. Next we describe this scheme and mention some specifications of this frame. Note that the scheme is compatible with the proxy scheme and the partial proxy scheme.
\begin{itemize}
 \item[i)] Compute the domain $H$, where the (weak) H\"{o}rmander condition holds. This domain can be computed from the coefficient functions without reference to the data. In most cases of practical interest this domain can be computed easily  as is the case for reduced classical LIBOR market models. Note that for reduced LIBOR market models the difference of the dimension of the H\"{o}rmander space and the rank of the diffusion matrix can be large. In case of time dependence of the coefficients $H$ is replaced by a time-parameterized family $\left(H_t\right)_{t\geq 0}$ where each $H_t$ is computed as in the time-homogeneous case. For the sake of simplicity we describe the following algorithm for time-invariant $H$, where an extension to the time-dependent case is rather trivial. 
 \item[ii)] Consider a time discretization $0=t_0<t_1<t_2<\cdots t_N=T$ and approximate the diffusion (\ref{origdiff}) by a prior scheme $X^p$ with
\begin{equation}\label{xp}
X^p_{t_{i+1}}(\omega_j)=X^p_{t_i}(\omega_j)+\mu\left( X^p_{t_i}(\omega_j)\right)\Delta t_i+\sigma^p\left(X^p_{t_i}(\omega_j)\right)\Delta W_{t_i}(\omega_j)
\end{equation}
for $\omega_j,~j\in J$ with $J$ some finite index set. Ensure that 
\begin{equation}
\sigma^p\equiv 0~\mbox{ if }~x\in {\mathbb R}^n\setminus H.
\end{equation}
The latter condition refers to the model assumption of a reduced diffusion (the class of ultraparabolic equations considered here).
We shall discuss below how the matrix $\sigma^p$ may be chosen if there are good approximations of densities such that a proxy scheme may be used. At each time step $t_i$ and for all $j\in J$ consider four possibilities of staying in the H\"{o}rmander domain (case $H^{ii}$), staying in the complement of the H\"{o}rmander domain (case $H^{00}$), going out of the H\"{o}rmander domain (case $H^{i0}$), and going into the H\"{o}rmander domain (case $H^{0i}$) at time step $t_{i+1}$:
\begin{itemize}
\item[$H^{00}$:] $X^p_{t_{i}}(\omega_j)\in {\mathbb R}^n\setminus H$ and $X^p_{t_{i+1}}(\omega_j)\in {\mathbb R}^n\setminus H$,
\item[$H^{ii}$:] $X^p_{t_{i}}(\omega_j)\in  H$ and $X^p_{t_{i+1}}(\omega_j)\in H$,
\item[$H^{0i}$:] $X^p_{t_{i}}(\omega_j)\in {\mathbb R}^n\setminus H$ and $X^p_{t_{i+1}}(\omega_j)\in H$,
\item [$H^{i0}$:] $X^p_{t_{i}}(\omega_j)\in  H$ and $X^p_{t_{i+1}}(\omega_j)\in {\mathbb R}^n\setminus H$.
\end{itemize}
\item[iii)] Next we compute the weights at each time step. Assume that the weights have been computed up to time step $t_i$, i.e., the weights $W_{t_m},~0\leq m\leq i$ are known.
\begin{itemize}
\item[$W^{00}$:] If $H^{00}$ holds, i.e., if $X^p_{t_{i}}(\omega_j)\in {\mathbb R}^n\setminus H$ and $X^p_{t_{i+1}}(\omega_j)\in {\mathbb R}^n\setminus H$, then a deterministic vector field equation has to be simulated for the path $\omega_j$ at time step $t_i$. Accordingly, we choose an Euler step in this case, i.e.,
\begin{equation}
\begin{array}{ll}
X^e_{t_{i+1}}(\omega_j)-X^p_{t_i}(\omega_j)=\mu\left( t_i,X^p_{t_i}(\omega_j)\right)\Delta t_i+\sigma\left( t_i,X^p_{t_i}(\omega_j)\right)\Delta W_{t_i}(\omega_j)\\
\\
=\mu\left( t_i,X^p_{t_i}(\omega_j)\right)\Delta t_i
\end{array}
\end{equation}
along with the original volatility matrix $\sigma$. Accordingly the weight is
\begin{equation}
W_{t_{i+1}}\equiv 1.
\end{equation}

\item[$W^{ii}$:] If $H^{ii}$ holds, i.e. if $X^p_{t_{i}}(\omega_j)\in  H$ and $X^p_{t_{i+1}}(\omega_j)\in H$, then we compute a Malliavin weight or a proxy weight. For example if a higher order WKB-expansion on $H$ is available and a good approximation, then we may choose
\begin{equation}
W_{t_{i+1}}(\omega_j)|_{F_{t_i}}=\frac{p_{\mbox{WKB}}\left( t_i,X^p_{t_i}(\omega_j),t_{i+1},X^p_{t_{i+1}}(\omega_j)\right)}{\phi^p(t_i,X^p_{t_i}(\omega_j),t_{i+1},X^p_{t_{i+1}}(\omega_j))}, 
\end{equation}
where $\phi^p$ denotes the density of the prior scheme.
  
\item[$W^{0i}$:] If $H^{0i}$ holds, i.e. $X^p_{t_{i}}(\omega_j)\in {\mathbb R}^n\setminus H$ and $X^p_{t_{i+1}}(\omega_j)\in H$, then there are several possibilities, depending on the geometry and the efficiency of the computability of the boundary of  $H$. We use 
\begin{definition}\label{mcdef}
The boundary of the H\"{o}rmander space is topologically simple with respect to a time discretization step $i+1$ of a scheme $X^p$ if
\begin{equation}
\left\lbrace \lambda X^p_{t_{i+1}}(\omega_j)+(1-\lambda) X^p_{t_{i}}(\omega_j)|\lambda\in [0,1]\right\rbrace \cap \partial H=\left\lbrace z^p_i \right\rbrace 
\end{equation}
for a singleton $\left\lbrace z^p_i \right\rbrace$. Here $\partial H$ denotes the boundary of $H$.
\end{definition}
If the boundary of $H$ is topologically simple with respect to to a time discretization step $i+1$ of a scheme $X^p$ and easily computable, then we can proceed as follows: we compute $t^H_{i}$ such that  $X^p_{t^H_{i}}(\omega_j)=z^p_i$. Then for the the first substep from $t_i$ to $t^H_i$ we are in the situation $W^{00}$ and proceed accordingly. For the second substep from $t^H_i$ to $t_{i+1}$ we define
\begin{equation}
W_{t_{i+1}}(\omega_j)|_{F_{t^H_i}}=\frac{p_{T}\left( t_i,X^p_{t_i}(\omega_j),t_{i+1},X^p_{t_{i+1}}(\omega_j)\right)}{\phi^p(t^H_i,X^p_{t^H_i}(\omega_j),t_{i+1},X^p_{t_{i+1}}(\omega_j))},
\end{equation}
where $p_T$ is a numerical approximation of the target density. 
There is a more general possibility which may be applied especially if the boundary of $H$ is not easily computable. In this case we consider two subcases. If $X^p_{t_{i-1}}(\omega_j)\in {\mathbb R}^n\setminus H$ then we cannot simulate a density from the preceding two time steps, and, hence, define 
\begin{equation}
X^e_{t_{i+1}}(\omega_j)-X^p_{t_i}(\omega_j)=\mu\left( t_i,X^p_{t_i}(\omega_j)\right)\Delta t_i+\sigma\left( t_i,X^p_{t_i}(\omega_j)\right)\Delta W_{t_i}(\omega_j),
\end{equation}
with the weight.
\begin{equation}
W_{t_{i+1}}\equiv 1.
\end{equation}
Otherwise, if $X^p_{t_{i-1}}(\omega_j)\in  H$, then we we compute a Malliavin weight or a proxy weight as a transition from $t_{i-1}$ two $t_i$. For example if a higher order WKB-expansion on $H$ is available and a good approximation, then we may choose
\begin{equation}
W_{t_{i+1}}(\omega_j)|_{F_{t_{i-1}}}=\frac{p_{\mbox{WKB}}\left( t_{i-1},X^p_{t_{i-1}}(\omega_j),t_{i+1},X^p_{t_{i+1}}(\omega_j)\right)}{\phi^e(t_{i-1},X^p_{t_{i-1}}(\omega_j),t_{i+1},X^p_{t_{i+1}}(\omega_j))}. 
\end{equation}
\item [$W^{i0}$:] If $H^{i0}$ holds, i.e., if $X^p_{t_{i}}(\omega_j)\in  H$ and $X^p_{t_{i+1}}(\omega_j)\in {\mathbb R}^n\setminus H$, then we proceed as follows.
If the boundary of $H$ is topologically simple with respect to to a time discretization step $i+1$ of a scheme $X^p$ and easily computable, then we can proceed as follows: we compute $t^H_{i}$ such that  $X^p_{t^H_{i}}(\omega_j)=z^p_i$ as in definition (\ref{mcdef}) above. Then for the the first substep from $t_i$ to $t^H_i$ we are in the situation $W^{ii}$ and proceed accordingly. For the second substep from $t^H_i$ to $t_{i+1}$ we are in the situation $W{00}$ and proceed accordingly. 
Otherwise we choose $W_{t_{i+1}}\equiv 1$
\end{itemize} 
\item[iv)] Our first approximation to for the value function is 
 Compute 
 \begin{equation}
 E\left(f(X^*_{T})|_{{\cal F}_{t_0}}\right)
 =E\left(f(X^0_T)W_T|_{{\cal F}_{t_0}}\right)
 \end{equation}
 along with
\begin{equation}
W_T|_{F_{t_0}}=\Pi_{k=0}^{N-1}W_{t_{k+1}}.
\end{equation}
Similarly for sensitivities. For example if $X^*_{0}=x$, then 
\begin{equation}
 \frac{\partial}{\partial x_m}E\left(f(X^*_{T})|_{{\cal F}_{t_0}}\right)
 =\frac{\partial}{\partial x_m}E\left(f(X^0_T)W_T|_{{\cal F}_{t_0}}\right)
 \end{equation}
may be computed by explicit derivatives or finite differences of the weight $W_T$.
Finally, the correction terms of the weighted MC-scheme are computed according to our construction of a regular solution above. We set the correction $C_{t_0}=C_0\equiv 0$ and describe the formula for the correction scheme $C_{t_i}$ in the value formula 
\begin{equation}
 u(t,x)=E\left(f(X^0_T)W_T|_{{\cal F}_{t_0}}\right)+\sum_{i=1}^NE(\Delta C_{t_i}(X))
\end{equation}
recursively. Here $\Delta C_{t_i}(\omega_j):=C_{t_{i+1}}(\omega_j)-C_{t_i}(\omega_j)$ define the correction increments at each time step $t_i$. Similarly for sensitivities. 
\begin{itemize}
\item[$C^{00}$:] If $H^{00}$ holds, i.e., if $X^p_{t_{i}}(\omega_j)\in {\mathbb R}^n\setminus H$ and $X^p_{t_{i+1}}(\omega_j)\in {\mathbb R}^n\setminus H$, then 
\begin{equation}
\Delta C_{t_i}(\omega_j)\equiv 0.
\end{equation}
\item[$C^{ii}$:] If $H^{ii}$ holds, i.e. if $X^p_{t_{i}}(\omega_j)\in  H$ and $X^p_{t_{i+1}}(\omega_j)\in H$, then 
\begin{equation}
\Delta C_{t_{i}}(\omega_j)\equiv 0.
\end{equation}
\item[$C^{0i}$:] If $H^{0i}$ holds, i.e. $X^p_{t_{i}}(\omega_j)\in {\mathbb R}^n\setminus H$ and $X^p_{t_{i+1}}(\omega_j)\in H$, then we consider two subcases. If $X^p_{t_{i-1}}(\omega_j)\in {\mathbb R}^n\setminus H$ then we cannot simulate a density from the preceding two time steps, and, hence, define 
\begin{equation}
\Delta C_{t_i}\equiv 0.
\end{equation}
Otherwise, if $X^p_{t_{i-1}(\omega_j}\in  H$ then we compute $\Delta C_{t_{i}}(\omega_j)$ via a Malliavin weight, i.e.,
\begin{equation}
\begin{array}{ll}
\Delta C_{t_{i+1}}(\omega_j)=\\
\\
\sum_{1\leq j,k\leq n}\left( f(X^p_{t_{i+1}})H^{jk}\left(X^p_{t_{i+1}},W^{jk}_{t_{i+1}}\right)-f(X^p_{t_{i-1}})H^{jk}\left(X^p_{t_{i-1}},W^{jk}_{t_{i-1}}\right)\right)\\
\\
-\sum_{j=1}^n\left( f(X^p_{t_{i+1}})H^{j}\left(X^p_{t_{i+1}},W^{j}_{t_{i+1}}\right)-f(X^p_{t_{i-1}})H^{j}\left(X^p_{t_{i-1}},W^{j}_{t_{i-1}}\right)\right),
\end{array}
\end{equation}
where for $m\in \left\lbrace i-1,i+1\right\rbrace$ 
\begin{equation}
W^j_{t_m}:=-\sum_{i=1}^nb_i(X^p_{t_m}),
\end{equation}
and
\begin{equation}
W^{jk}_{t_m}:=\sum_{j,k=1}^n\frac{1}{2}\left( \sigma\sigma^T\right)_{jk}(X^p_{t_m}).
\end{equation}
Furthermore, $H^j$ and $H^{jk}$ are the first order and second order Malliavin weights defined in (\ref{1M}) and (\ref{hM}) above. Here $H^{jk}$ refers to the Malliavin weight $H^{\alpha}$ along with multi-index $\alpha=(\alpha_1,\cdots,\alpha_n)$, where $\alpha_j=\alpha_k=1$ and $\alpha_m=0$ for $m\neq 0$.
\item [$C^{i0}$:] If $H^{i0}$ holds, i.e., if $X^p_{t_{i}} (\omega_j)\in  H$ and $X^p_{t_{i+1}}(\omega_j)\in {\mathbb R}^n\setminus H$, then we choose $\Delta C_{t_{i+1}}\equiv 0$. We could consider some subclass and improve the algorithm a bit at this point.
\end{itemize} 
\item[iv)] The computation scheme then is
 \begin{equation}
 E\left(f(X^*_{T})|_{{\cal F}_{t_0}}\right)
 =E\left(f(X^0_T)W_T|_{{\cal F}_{t_0}}\right)+\sum_{i=1}^NE(\Delta C_{t_i})
 \end{equation}
 along with
\begin{equation}
W_T|_{F_{t_0}}=\Pi_{k=0}^{N-1}W_{t_{k+1}},
\end{equation}
and the other weights $W^j_{t_i}$ and $W^{jk}_{t_i}$ implicit in $\Delta C_{t_{i}}$ and defined above.
Similarly for sensitivities.
\end{itemize}

\footnotetext[1]{DZ BANK, Platz der Republik, 60265 Frankfurt, Germany {\tt Christian Fries <email@christian-fries.de>}}
\footnotetext[2]{Weierstrass Institute for Applied Analysis and Stochastics, Mohrenstr. 39, D-10117 Berlin, Germany. {\tt kampen@wias-berlin.de}}


\begin{thebibliography}{199}



\bibitem{BroadieKaya2006ExactHeston}

	\textsc{Broadie, Mark; Kaya, Ozguer}: Exact Simulation of Stochastic Volatility and other Affine Jump Diffusion Processes. Operations Research, 2006, Vol.~54, No.~2, 217-231.



\bibitem {Cass}
	\textsc{Cass, T.}: {\emph{ Smooth densities for solutions to stochastic differential equations with jumps}} Stochastic Processes and their Applications, Volume 119, Issue 5, May 2009


\bibitem{Cheyette}

	\textsc{Cheyette, O.}: Markov Representation of the Heath-Jarrow-Morton Model. Working Paper. BARRA Inc.

\bibitem[Duffie(2001)]{Duf01}
	\textsc{Duffie,  D.}: {\it Dynamic Asset Pricing Theory}. Princeton: Princeton University Press 2001


\bibitem[Elie, Fermanian, Touzi(2007)]{EFT}
	\textsc{Elie, R., Fermanian, J.-D., Touzi, N.}: Kernel estimation of Greek weights by parameter randomization, {\em Annals of Applied Probability,} 2006.

\bibitem{FriesLectureNotes2007}

	\textsc{Fries, Christian P.}: Mathematical Finance. Theory, Modeling, Implementation. John Wiley \& Sons, 2007.



\bibitem[Fries \& Joshi(2006)]{FJ}

	\textsc{Fries, Christian P.; Joshi, Mark S.}: Partial Proxy Simulation Schemes for Generic and Robust Monte-Carlo Greeks. \textit{Journal of Computational Finance}, 12-1. (2008).


\bibitem{FriesKampenProxy2005}

	\textsc{Fries, Christian P.; Kampen, J\"org}: Proxy Simulation Schemes for generic robust Monte Carlo sensitivities, process oriented importance sampling and high accuracy drift approximation. \textit{Journal of Computational Finance}, 10-2, p.~97-128. (2006).\newline
	\url{http://www.christian-fries.de/finmath/proxyscheme}


\bibitem{FKSE}
	{\sc Fries, C., Kampen, J.} {\em On a class of semi-elliptic diffusion models},(arXiv)

\bibitem[Fries \& Kampen(2007)]{FrKa}
	\textsc{Fries, Christian; Kampen, J\"org}: {Proxy Simulation Schemes for generic robust Monte Carlo sensitivities, process oriented importance sampling and high accuracy drift approximation (with applications to the LIBOR market model)}, {\em Journal of Computational Finance,} Vol. 10, Nr. 2, 97-128, 2007.

\bibitem{HaganSABR2002}
	\textsc{Hagan, Patrick S.; Kumar, Deep; Lesniewski, Andrew S.; Woodward, Diana E.}: Managing Smile Risk. \textit{Wilmott Magazine}, September 2002, 84-108.

\bibitem[H\"{o}rmander, L. ]{H}
	\textsc{H\"{o}rmander, L.}: {Hypoelliptic second order differential equations}, {\em Acta Math.,} Vol. 119,  147-171, 1967.


\bibitem[Jamshidian(1997)]{Jam97}
	\textsc{Jamshidian, F.}: LIBOR and swap market models and measures.
	{Finance and Stochastics}, 1, 293-330~(1997)


\bibitem[Joshi(2006)]{JoshiSpeedDecorrelation2006}
	\textsc{Joshi, Mark S.}: Achieving decorrelation and speed simultaneously in the LIBOR market model, Journal of Risk, 2006


	
	
\bibitem[Kampen(2008)]{Ka3}
	\textsc{Kampen, J\"org}: {Global regularity and probabilistic schemes for free boundary surfaces of multivariate American derivatives and their Greeks}, SIAM Journal of Appl. Math, 2011.


\bibitem[Kampen(2009)]{Ka4}
	\textsc{Kampen, J\"org}: {Characteristic functions of affine processes via calculus of their operator symbols},  (Februar 2010), 	arXiv:1002.2764v2 [math.FA].

\bibitem[Kampen(2009)]{Ka5}
	\textsc{Kampen, J\"org}: {On local analytic expansions of the densities in the context of (micro)-hypoelliptic and classes of semi-elliptic equations.} (2010)
	
\bibitem[Kampen, Kolodko, Schoenmakers (2007)]{KKS}
	\textsc{Kampen, J., Kolodko, A., Schoenmakers, J.}: {Monte Carlo Greeks for financial products via approximative Greenian Kernels\/}
Siam J. Sc. Comp., vol. 31 , p. 1-22, 2008.




\bibitem[LordPelsser(2007)]{LordSlopeLevelCurvature2007}
%
	\textsc{Lord, R.; Pelsser A.A.J.}: "Level-slope-curvature - fact or artefact?", Applied Mathematical Finance, vol. 14, no. 2, pp. 105-130. (2007). 


\bibitem[\O{}ksendahl (1995)]{O}
	\textsc{\O{}ksendahl, B.}: Stochastic Differential Equations, {\em Springer, 4th ed. (1995)} 

\bibitem{RebonatoSABRLMM2009}

	\textsc{Rebonato, Riccardo; McKay, Kenneth; White, Richard}: The SABR/LIBOR Market Model: Pricing, Calibration and Hedging for Complex Interest-Rate Derivatives: Pricing, Calibrating and Hedging. John Wiley \& Sons, 2009.


\bibitem{RitchkenSankarasubramanian1995}

	\textsc{Ritchken, P.; Sankarasubramanian, L.}: Volatility structures of forward rates and the dynamics of the term structure. \textit{Mathematical Finance}, 5(1):55?72, 1995.

\bibitem[Stroock(1985)]{KS}
Kusuoka, S., Stroock, D.: {Application of Malliavin calculus II} J. Fac. Sci. Univ. Tokio, Sect. IA, Math. 32, p. 1-76, 1985.


\bibitem[Shiatzu(1985)]{KS}
\textsc{Shizuta, Y.,  Kawashima, S.}: {Systems of equations of hyperbolic-parabolic type with applications to the discrete Boltzmann equation} Hokaida Mathematical Journal, Vol. 14, p.249-275, 1985.

\end{thebibliography}
\end{document}